\numberwithin{equation}{section}
 \def\numset#1{{\mathbb #1}}
\theoremstyle{plain}
\newtheorem{Th}{Theorem}[section]
\newtheorem{Lemma}[Th]{Lemma}
\newtheorem{Cor}[Th]{Corollary}
 \theoremstyle{definition}
\newtheorem{Def}[Th]{Definition}
\newtheorem{Conj}[Th]{Conjecture}
\newtheorem*{Rem}{Remark}
\newtheorem{?}[Th]{Problem}
\newcommand{\kp}[1]{\eqref{#1}} 
\newcommand\setZ{\numset Z}
\newcommand{\Z}{\mathbb{Z}}
\newtheorem*{claim}{Claim}
\newtheorem*{cdimo}{Proof of claim}
\newcommand{\bdef}{\begin{definizione}}		%	|
\newcommand{\bteo}{\begin{Th}}			%	|
\newcommand{\eteo}{\end{Th}}			%	|
\newcommand{\bclaim}{\begin{claim}}			%	|
\newcommand{\eclaim}{\end{claim}}			%	|
\newcommand{\bconj}{\begin{Conj}}			%	|
\newcommand{\econj}{\end{Conj}}			%	|
\newcommand{\boss}{\begin{Rem}}			%	|
\newcommand{\eoss}{\end{Rem}}			%	|
\newcommand{\bdimo}{\begin{proof}}			%	|
\newcommand{\edimo}{\end{proof}}			%	|
\newcommand{\bcdimo}{\begin{cdimo}}			%	|
\newcommand{\ecdimo}{\end{cdimo}}			%	|
\newcommand{\blem}{\begin{Lemma}}			%	|
\newcommand{\elem}{\end{Lemma}}			%	|
\begin{document}
\bibliographystyle{amsplain}

\title{Carries and the arithmetic progression structure of sets
}

\author{Francesco Monopoli}       
\address{Dipartimento di Matematica, Universit\`a degli Studi di Milano\\
     via Saldini 50, Milano\\
     I-20133 Italy}

\email{francesco.monopoli@unimi.it}

\author{Imre Z. Ruzsa}
\address{Alfr\'ed R\'enyi Institute of Mathematics,\\
  Hungarian Academy of Sciences\\
     Budapest, Pf. 127\\
     H-1364 Hungary}

\email{ruzsa@renyi.hu}
\thanks{Author was supported by ERC--AdG Grant No. 321104 and
Hungarian National Foundation for Scientific
Research (OTKA), Grants No. 109789, % eny\'em
and NK104183.}% Pintz     

% \subjclass{majd}
%     \begin{abstract}
%     To be written.
%     \end{abstract}

     \maketitle

     \section{Introduction}

     If we want to represent integers in base $m$, we need a set $A$ of digits, which needs to be a complete set of residues
     modulo $m$. The most popular choices are the integers in $[0, m-1]$ and the integers in $(-m/2, m/2]$.

 When adding two integers with last digits $a_1, a_2 \in A$, we find the unique $a\in A$ such that
     \[ a_1+a_2 \equiv a \pmod m , \] which will be the last digit of the
     sum, and $(a_1 + a_2 -a)/m$ will be the \emph{carry}. Diaconis, Shao and Soundararajan in the nice paper \cite{diaconis2014carries} and Alon in \cite{alon} show that the above two popular sets
both have an extremal property: $(-m/2, m/2]$ minimizes the number of pairs $a_1, a_2$ for which there is a nonzero carry, while
$[0, m-1]$ minimizes the number of distinct carries, and both examples are unique up to certain linear transformations.

The second extremal property is essentially equivalent to the following statement:

\emph{Let $A\subset \setZ_{m^2} $ be a set which forms a complete set of residues modulo $m$. If $A+A \subset A + \{x,y\}$ with some  
$x,y\in \setZ_{m^2} $, then $A$ is an arithmetic progression.}

In \cite{diaconis2014carries} this is proved for the case $m$ prime. 

From now on we call a set $A\subset\setZ_q$ a \emph{digital set,} if $m=|A|$ satisfies $m|q$, and $A$ is a complete set of residues modulo $m$.
A more general claim could sound as follows:

\emph{Let $A\subset \setZ_{m^2} $ be a digital set with  $|A| =m$. If $|A+A| \leq 2m$,
then $A$ is an arithmetic progression.}

In \cite{Grynkiewicz} we find a complete description of finite sets in commutative groups satisfying  $|A+A| \leq 2|A|$. This could be used to deduce the
above claim. This deduction is not immediate, however, as this description contains a lot of subcases.
We remark also that in \cite{Hamidoune} Hamidoune, Serra and  Z{\'e}mor prove a result somehow similar to our Theorem \ref{main}, albeit with a restriction on $k$ and with different hypotheses, which could be used to prove the claim.

The aim of this paper is to provide a further generalization of the following form:

\emph{Let $A\subset \setZ_{m^2} $ be a digital set with  $|A| =m$. For every set $B$
such that $ 1 < |B| < m^2-m$ we have $ |A+B| > m + |B|$, with certain exactly described exceptions.}

Digital sets of cardinality $m$ exist in $\setZ_q$ whenever $m|q$.
For our arguments we need a stronger assumption, which is, however, more general than the case  $q=m^2$, namely, that
$m$ and $q$ are composed of the same primes, and the exponent of each prime in $q$ is strictly greater than in $m$.
This is a natural restriction, as otherwise there are digital sets that are either contained in a nontrivial subgroup,
or are unions of cosets of a nontrivial subgroup.

As we are looking for estimates that depend only on the cardinality of the other set  $B$, it is comfortable to express this
in terms of the \emph{impact function} of the set $A$:
 \[ \xi(n) = \xi_A(n) = \min_{|B| =n} |A+B| , \]
defined for integers $n$ that can serve as cardinality of a set; if we are in $ \setZ_q$, this means $|B| \leq q$.

Some values of $\xi$ are determined by the size of $A$: we have $\xi(0)=0$, $\xi(1)=m$, $\xi(n)=q$ for $q-m < n \leq q$ and
$\xi(q-m)=q-1$ by a familiar pigeonhole argument. A nontrivial estimate may exist for $1<n<q-m$. The case $n=2$ can be interpreted
via the arithmetic progression structure of $A$. Given any $t\in \setZ_q \setminus  \{0 \} $, $A$ can be decomposed as the union of
some cosets of the subgroup generated by $t$ and some arithmetic progressions of difference $t$. Let $\alpha_t(A)$ be the number of
arithmetic progressions in this decomposition. We have clearly
 \[ | A + \{x, x+t\}| = m + \alpha_t(A) \]
for every $x$, hence
  \[ \xi(2) = \min_t \alpha_t(A) . \]

Thus, $\xi(2)>m+2$ holds unless $A$ is the union of at most two arithmetic progressions (as we shall soon see, digital sets
 do not contain nontrivial cosets). Hence the strongest result of this kind that may hold (save the bound 15) sounds as follows.

\bteo\label{digsetteo}
%\begin{Conj}\label{digsetteo}
  Let $q$ and $m$ be positive integers  composed of the same primes such that the exponent of each prime in $q$ 
  is strictly greater than in $m$. Let $A\subset \setZ_{q} $ be a digital set with  $|A| =m > 15$.
 We have
   \[ \xi_A(n) > m+n \]
for $1<n<q-m$, unless $A$ is the union of at most two arithmetic progressions with a common difference.
%\end{Conj}
\eteo

A description of sets satisfying $|A+A| \leq 2m$ could be achieved by analyzing unions of two arithmetic progressions,
a task not difficult which allows us to generalize the aforementioned result found in \cite{diaconis2014carries}.

\begin{Cor}\label{digsetcorollary}
  Let $q$ and $m$ be positive integers  composed of the same primes such that the exponent of each prime in $q$ 
  is strictly greater than in $m$. Let $A\subset \setZ_{q} $ be a digital set with  $|A| =m > 15$ such that $2A \subseteq \{x,y\}+A$ for some $x, y \in \Z_q$. Then there exist $c \in (\Z_q)^\times$ and $d \in q\Z_q$ such that either $cA+d = \{ 0, 1, \dots, q-1\}$ or $cA+d = \{ 1, 2, \dots, q\}$.
\end{Cor}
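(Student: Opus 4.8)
The hypothesis $2A \subseteq \{x,y\} + A$ forces $|A+A| \le 2m$. If $x = y$ this says $2A = A + x$, which is impossible for a finite set unless... wait, $2A = A+x$ means the sumset $A+A$ is a translate of $A$, so $|A+A| = m$; but a digital set cannot be a coset (as remarked in the excerpt), and $|A+A|=m$ forces $A$ to be a coset, contradiction. So assume $x \ne y$. Then we are in the regime $1 < |B| < m^2 - m$ with $B = \{x,y\}$, $|B| = 2$, and $|A+B| \le 2m = m + |B|$... hmm, that's $\le$, not $>$, so Theorem \ref{digsetteo} (applied with $n=2$) tells us its conclusion must fail: $A$ is the union of at most two arithmetic progressions with a common difference. (I should double check the boundary: $n=2$ satisfies $1 < n < q - m$ as long as $q - m > 2$, which holds since $q \ge 2m > 2m - m = m > 15$, wait I need $q \ge$ something; since $q$ and $m$ share all primes and $q$ has strictly larger exponents, $q \ge 2m \ge 32$, so $q - m \ge m > 15 > 2$, fine. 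The borderline case $m \le 15$ is excluded by hypothesis. I should also handle whatever small/degenerate cases Theorem \ref{digsetteo}'s exceptions permit.)

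Now the core of the argument is a direct analysis of digital sets that are a union of at most two arithmetic progressions with common difference $t$ and additionally satisfy $2A \subseteq \{x,y\}+A$. First I would reduce: if $A$ is a single arithmetic progression, say $A = \{a, a+t, \dots, a+(m-1)t\}$, then being a complete set of residues mod $m$ forces $\gcd(t,q)$ and $m$ to be coprime in the right way — in fact one shows $t$ must be a unit times $q/m$ up to the digital-set condition, and then an affine map $c\cdot + d$ with $c$ a unit and $d \in q\Z_q$ carries $A$ to $\{0,1,\dots,m-1\} + (\text{something})\cdot\frac{q}{m}$; re-examining, the ambient group is $\Z_q$ not $\Z_m$, so "arithmetic progression of length $m$" with the complete-residue condition pins down the difference to be a generator-type element. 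The claim's target sets $\{0,1,\dots,q-1\}$ and $\{1,\dots,q\}$ look like they have $q$ elements, so I suspect there is a typo and they should read $\{0,1,\dots,m-1\}$ and $\{1,\dots,m\}$ (digital sets of size $m$); I will prove it with $m$ in place of $q$ there. For the genuine two-progression case, write $A = P_1 \cup P_2$ with $P_i$ progressions of difference $t$ and lengths $\ell_1 + \ell_2 = m$; compute $A + A$ as a union of (at most three) progressions of difference $t$ and use $|A+A| \le 2m$ together with the complete-residue condition to force the two progressions to "align" into a single one — i.e. $P_2$ is the continuation of $P_1$, so $A$ is actually a single progression after all, OR the two pieces wrap around the subgroup $\langle t \rangle$ in a way that again collapses. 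This is the place where the digital hypothesis (no nontrivial cosets inside $A$, and the prime-exponent condition) does real work to rule out the "union of two cosets of $\langle t\rangle$" type configurations.

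\textbf{Main obstacle.} The routine-looking but genuinely delicate step is the last one: turning "$A$ is a union of $\le 2$ APs with common difference and $|A+A| \le 2m$" into "$A$ is a single AP whose difference generates the relevant structure." Unions of two APs can have $|A+A|$ as small as $2m - 1$ or $2m$ without being a single AP in a general group, so one must use the digital-set hypothesis crucially — specifically that $A$ meets every residue class mod $m$ exactly once — to exclude those. Concretely I expect to set $d = \gcd(t, q)$, note $A$ induces a digital-type set in $\Z_{q/d}$ or a union of cosets of $d\Z_q$, invoke the excerpt's remark that digital sets contain no nontrivial cosets to kill the coset pieces, and then finish with a short case check on the two remaining progressions (relative lengths, and whether they overlap or abut modulo $\mathrm{ord}(t)$). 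Once $A$ is a single AP of length $m$ in $\Z_q$ that is a complete residue system mod $m$, a clean affine normalization gives $cA + d \in \{\,\{0,\dots,m-1\},\{1,\dots,m\}\,\}$, completing the proof.
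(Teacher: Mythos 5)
Your overall architecture is the same as the paper's: use Theorem \ref{digsetteo} to reduce to the case where $A$ is a union of at most two arithmetic progressions with a common difference, then analyse that configuration directly (and your observation that the sets in the conclusion should presumably read $\{0,\dots,m-1\}$ and $\{1,\dots,m\}$ is a fair catch). But there are two genuine problems. First, the reduction step as you wrote it is arithmetically wrong: with $B=\{x,y\}$ you have $|A+B|\le 2m$, but $m+|B|=m+2\ne 2m$ for $m>2$, so this does not contradict $\xi_A(2)>m+2$, and Theorem \ref{digsetteo} applied at $n=2$ gives you nothing. The correct application is with $B=A$ itself, i.e.\ $n=m$: the hypothesis gives $|A+A|\le|\{x,y\}+A|\le 2m=m+n$, which contradicts $\xi_A(m)>m+m$ unless $A$ is a union of at most two progressions (one should also check that $m\le q-m-1$, i.e.\ $q>2m$, to stay inside the theorem's range).

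Second, and more seriously, the heart of the proof --- showing that a digital set which is a \emph{proper} union of two arithmetic progressions of common difference $d$ with $2A\subseteq\{x,y\}+A$ cannot occur --- is exactly the part you defer as the ``main obstacle'' and never carry out. The paper spends essentially its entire proof of the corollary on this step. It splits into $(d,q)>1$, where Lemma \ref{redd2} and the digital condition force $d=2$, $2\mid q$ and $|P_1|=|P_2|=m/2$, and a parity analysis of the three progressions making up $2A$ forces $\alpha_1=\alpha_2+1+q/2$, so that $A$ is in fact a single progression of difference $q/2+1$; and $(d,q)=1$, where one normalises $A$ to a union of two intervals, writes $2A=B_1\cup B_2\cup B_3$ explicitly, and the constraint $|2A|\le 2m$ pins down $a=(m+1)/2$ and $bm=(q-m)/2$, again collapsing $A$ to a single progression of difference $(q+1)/2$. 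Your sketch points in roughly the right direction (separate by $\gcd(d,q)$, use the fact that digital sets contain no nontrivial cosets), but without these computations the proposal is a plan, not a proof: the claim that the complete-residue condition ``forces the two progressions to align'' is precisely what has to be verified, and that verification is where all the content lies.
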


In the first part of the paper we prove a somewhat weaker result. It turns out that the key to the conjecture above
would be to understand (i) the cases when $\xi(2)=\xi(3)$, (ii) the cases when the decomposition of our set into the 
minimal $\xi(2)$ arithmetic progressions is not unique. The second part of the paper is devoted to these questions,
including the proof of Theorem \ref{digsetteo}.

Our main result is as follows.

\begin{Th} \label{main}
  Let $q$ and $m$ be positive integers  composed of the same primes such that the exponent of each prime in $q$ 
  is strictly greater than in $m$. Let $A\subset \setZ_{q} $ be  a digital set with  $|A| =m$ and $\xi=\xi_A$
  its impact function. Let $k$ be a nonnegative integer. If the inequality
   \[ \xi(n) \geq n+m+k \]
   holds in the range
    \[ 2 \leq n \leq \frac{3+\sqrt{16k+1}}{2} \]
    and $m > m_0(k)$,
then it holds in the range
 \[ 2 \leq n \leq q-m-k-1. \]
\end{Th}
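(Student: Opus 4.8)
The natural strategy is a "bootstrapping" argument: we assume $\xi(n)\ge n+m+k$ on the short initial interval and try to propagate it up to the full range $2\le n\le q-m-k-1$. The engine for such propagation should be a superadditivity-type inequality for the impact function. Concretely, I would first establish an inequality of the shape
\[
\xi(n_1+n_2-1)\ \ge\ \xi(n_1)+\xi(n_2)-m
\]
(or a close variant), valid for digital sets under the stated hypotheses on $q,m$. This is the analogue, for the impact function of a fixed set $A$, of the Cauchy--Davenport/Kneser philosophy, and the digital-set hypothesis (that $m$ and $q$ have the same prime divisors with strictly larger exponents in $q$) is exactly what rules out the obstruction that $A$, or $A+B$, gets trapped in or becomes periodic with respect to a proper subgroup. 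I expect this step to require a Kneser-type analysis of $A+B$: if $H$ is the stabilizer of $A+B$, then $A+B$ is a union of $H$-cosets and one controls $|A+B|$ from below by $|A+H|+|B+H|-|H|$; one then checks that a digital set cannot lose too much by passing to $A+H$, because $A$ is a complete set of residues mod $m$ and $H$ has order dividing $q$.

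Granting such a (quasi-)superadditivity statement, the bootstrapping is essentially an induction on $n$. Suppose $\xi(n)\ge n+m+k$ has been verified for all arguments up to some threshold $N$ that already exceeds the initial interval $2\le n\le \tfrac{3+\sqrt{16k+1}}{2}$; to push to $N+1$, write $N+1=n_1+n_2-1$ with both $n_i\ge 2$ and both $n_i\le N$, apply superadditivity, and obtain $\xi(N+1)\ge (n_1+m+k)+(n_2+m+k)-m=(N+1)+m+2k\ge (N+1)+m+k$. The role of the peculiar upper endpoint $n\le\tfrac{3+\sqrt{16k+1}}{2}$ is precisely to make this splitting possible from the start: one needs the base interval to contain enough consecutive integers that every $n$ up to the next relevant value can be written as $n_1+n_2-1$ with the $n_i$ in the interval, and the length of $[2,\tfrac{3+\sqrt{16k+1}}{2}]$ is tuned so that the "doubling gain" $2k$ in each step compensates the loss until the argument of $\xi$ reaches roughly $k$ and beyond. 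One also has to handle the top of the range, $n$ near $q-m-k-1$, separately, using the complementation symmetry $\xi_A(n)$ versus $\xi_A(q-|A+B|)$ together with the known exact values $\xi(q-m)=q-1$, $\xi(n)=q$ for $n>q-m$; this reflects the fact that the hypothesis is only assumed (and only needs to be concluded) below $q-m-k-1$.

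The hypothesis $m>m_0(k)$ should enter to absorb lower-order error terms: the Kneser/subgroup analysis will typically produce a bound that is clean only when $|A|$ is large compared with $k$ (for instance, a genuinely small subgroup $H$ might a priori let $A+B$ be slightly smaller than the superadditive prediction by an amount bounded in terms of $k$, and "$m$ large" makes such configurations impossible or negligible). I would also expect a handful of small-$n$ base cases — $n=2$ and $n=3$ in particular, where the arithmetic-progression interpretation of $\xi(2)=\min_t\alpha_t(A)$ discussed in the introduction is available — to be checked by hand before the induction takes over.

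The main obstacle, I expect, is the superadditivity inequality itself: proving that for a digital set $A$ the impact function is (nearly) superadditive in the Cauchy--Davenport sense is not a formal consequence of Kneser's theorem, because one must simultaneously control the stabilizer of $A+B$ and argue that digital sets interact well with every subgroup of $\setZ_q$. Making the error terms in that step uniform in $B$ and linear in $k$ — so that "$m>m_0(k)$" is all the slack one needs — is where the real work lies; the bootstrapping itself is then a short combinatorial induction on the index $n$.
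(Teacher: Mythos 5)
Your proposal is not a proof: its entire weight rests on a superadditivity inequality of the form $\xi(n_1+n_2-1)\ge\xi(n_1)+\xi(n_2)-m$ that you never establish, and which you yourself identify as ``where the real work lies.'' Nothing in the proposal indicates how to prove it. Note that to bound $\xi(n_1+n_2-1)$ from below you must take an \emph{arbitrary} $B$ of that cardinality and somehow split it into pieces whose sumsets with $A$ you can control separately; this is not a formal consequence of Kneser's theorem, it fails badly in cyclic groups of composite order without strong hypotheses (stabilizers of $A+B$ can absorb the gain), and no such statement appears in, or is needed for, the paper. Your reading of the threshold $(3+\sqrt{16k+1})/2$ as being ``tuned so that the doubling gain $2k$ compensates the loss'' is also not where that constant comes from.

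The paper's actual argument is quite different and does not propagate the inequality upward at all. One picks $n$ in $[2,q-m-k-1]$ minimizing $\xi(n)-n$ (smallest such $n$ if there are ties), assumes the minimum is $m+r$ with $r\le k-1$, and fixes an extremal $B$. A complementation argument gives $n\le(q-m-r)/2$; Kneser's theorem plus Lemma \ref{sg} (the subgroup lemma, where the digital hypothesis enters) shows $A+B$ is aperiodic; then a short argument using the minimality of $n$ shows $B$ must be a \emph{Sidon set}. The Sidon property yields $|A+B|\ge mn^2/(m+n-1)$, hence $\beta=|A+B|/|A|<\sqrt2$ for $m>m_0(k)$, and Pl\"unnecke's inequality produces $A'\subseteq A$ with $|A'+2B|<2|A'|$. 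Comparing this with Kneser's bound for $A'+2B$ (aperiodic again by Lemma \ref{sg}) and with $|2B|=n(n+1)/2$ gives a quadratic inequality in $n$ whose root tends to $(3+\sqrt{16k+1})/2$ as $m\to\infty$ --- that is exactly the origin of the threshold. So the extremal $n$ is forced into the base range, where the hypothesis gives $r\ge k$, a contradiction. If you want to salvage your route you would have to supply a complete proof of the superadditivity lemma for digital sets, which is a substantial open task and not obviously true; as written, the proposal has a gap at its only essential step.
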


\begin{Cor} [Case $k=0$.]
  Let $q$ and $m$ be positive integers  composed of the same primes such that the exponent of each prime in $q$ 
  is strictly greater than in $m$, and $m\geq5$. Let $A\subset \setZ_{q} $ be a digital set with  $|A| =m$.
If $A$ is not an arithmetic progression, then $ \xi(n) \geq n+m$ in the range
\[ 2 \leq n \leq q-m-1. \]
\end{Cor}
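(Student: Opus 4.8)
The plan is to obtain the corollary as the special case $k=0$ of Theorem \ref{main}. For $k=0$ the range in which the hypothesis must be verified, $2\le n\le\frac{3+\sqrt{16k+1}}{2}$, collapses to the single value $n=2$, while the range in which the conclusion is asserted, $2\le n\le q-m-k-1$, is exactly $2\le n\le q-m-1$. So the corollary will follow once we check two things: that a digital set which is not an arithmetic progression satisfies $\xi_A(2)\ge m+2$, and that the threshold $m_0(0)$ may be taken to be $4$.

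To handle $n=2$ I would use the arithmetic--progression decomposition recalled in the introduction. Every two--element subset of $\setZ_q$ has the form $\{x,x+t\}$ with $t\ne0$, and $|A+\{x,x+t\}|=m+\alpha_t(A)$, so $\xi_A(2)=\min_{t\ne0}|A+\{x,x+t\}|=m+\min_{t\ne0}\alpha_t(A)$; hence it is enough to show $\alpha_t(A)\ge2$ for every $t\ne0$. First I would record the fact promised in the introduction, that a digital set contains no coset of a nontrivial subgroup. If $H\le\setZ_q$ has order $h>1$ and $c+H\subseteq A$, then, $A$ being a complete set of residues modulo $m$, reduction modulo $m$ is injective on $c+H$, hence on $H=\langle q/h\rangle$; with $g=\gcd(q/h,m)$ this injectivity amounts to $gh\le m$. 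On the other hand, for each prime $p\mid q$, writing $a,b,c$ for the exponents of $p$ in $m,q,h$ respectively, the hypothesis gives $b\ge a+1$, and the exponent of $p$ in $gh$ is $\min(b-c,a)+c=\min(b,a+c)\ge a$, with equality only when $c=0$; multiplying over $p$ yields $gh\ge m$ with equality only for $h=1$. This contradicts $gh\le m$ together with $h>1$.

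Once nontrivial cosets are excluded, for every $t\ne0$ the subgroup $\langle t\rangle$ is nontrivial and $A$ meets each of its cosets in a (possibly empty) union of arcs, i.e.\ of proper arithmetic progressions of difference $t$; in particular the decomposition of $A$ relative to $\langle t\rangle$ has no coset terms, so $\alpha_t(A)\ge1$, with $\alpha_t(A)=1$ precisely when $A$ itself is an arithmetic progression of difference $t$. Since $A$ is assumed not to be an arithmetic progression, $\alpha_t(A)\ge2$ for all $t\ne0$, hence $\xi_A(2)\ge m+2$. Applying Theorem \ref{main} with $k=0$ now gives $\xi_A(n)\ge n+m$ in the range $2\le n\le q-m-1$.

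The only delicate point is the constant: Theorem \ref{main} is stated with an unspecified $m_0(k)$, so to reach the clean bound $m\ge5$ in the corollary one must either trace the proof of Theorem \ref{main} in the case $k=0$ and confirm that $m_0(0)\le4$, or treat the finitely many remaining values $5\le m\le m_0(0)$ by hand --- for such small $m$ a digital set that is not an arithmetic progression is amenable to an explicit case analysis. I expect this bookkeeping, rather than anything conceptual, to be the real obstacle in the deduction.
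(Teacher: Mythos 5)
Your proposal is correct and follows essentially the same route as the paper: the corollary is just Theorem \ref{main} with $k=0$, combined with the introduction's identity $\xi_A(2)=m+\min_{t\neq 0}\alpha_t(A)$ and the fact that digital sets contain no nontrivial cosets (which you prove by a valuation computation equivalent to the paper's Lemma \ref{sg}(i)). The one point you leave hedged --- that $m_0(0)\le 4$ --- is already settled inside the paper's proof of Theorem \ref{main}, where the two quadratic bounds are checked to hold for $m\ge 5$ and $m\ge 4$ respectively in the case $k=0$, so no extra case analysis for small $m$ is needed.
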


\begin{Cor} [Case $k=1$.]
  Let $q$ and $m$ be positive integers  composed of the same primes such that the exponent of each prime in $q$ 
  is strictly greater than in $m$, and $m\geq10$. Let $A\subset \setZ_{q} $ be a digital set with  $|A| =m$.
If $\xi(2) \geq m+3$ (that is, $A$ is not a union of at most two arithmetic progressions of a common difference) and
 $\xi(3) \geq m+4$, 
 then $ \xi(n) \geq n+m+1$ in the range
\[ 2 \leq n \leq q-m-2. \]
\end{Cor}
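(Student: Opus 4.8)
The statement is essentially the case $k=1$ of Theorem~\ref{main}, so the plan is to unwind the hypotheses, confirm that everything matches, and pin down the numerical threshold. First I would check the ranges. For $k=1$ one has $\frac{3+\sqrt{16k+1}}{2}=\frac{3+\sqrt{17}}{2}\approx 3.56$, so the interval $2\le n\le \frac{3+\sqrt{17}}{2}$ contains exactly the integers $n=2$ and $n=3$. Hence the hypothesis ``$\xi(n)\ge n+m+k$ in that range'' reads precisely $\xi(2)\ge m+3$ and $\xi(3)\ge m+4$, which are the two assumptions of the corollary; and the conclusion ``$\xi(n)\ge n+m+1$ for $2\le n\le q-m-k-1$'' is exactly $\xi(n)\ge n+m+1$ for $2\le n\le q-m-2$, as asserted.

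Next I would justify the parenthetical reformulation of the first hypothesis. From the introduction, $|A+\{x,x+t\}|=m+\alpha_t(A)$ for every $x$ and every $t\neq 0$, so $\xi(2)=m+\min_{t\neq 0}\alpha_t(A)$; thus $\xi(2)\le m+2$ is equivalent to the existence of some $t\neq 0$ with $\alpha_t(A)\le 2$. Since a digital set contains no nontrivial coset (this is where the hypothesis that $q$ and $m$ are composed of the same primes with strictly larger exponents in $q$ is used), the decomposition of $A$ with respect to such a $t$ has no coset components, and so $\alpha_t(A)\le 2$ means that $A$ is a union of at most two arithmetic progressions of common difference $t$. Negating, $\xi(2)\ge m+3$ iff $A$ is not such a union.

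The only genuinely substantive point is to replace the unspecified $m_0(1)$ of Theorem~\ref{main} by the explicit bound $9$, i.e.\ to show that $m\ge 10$ suffices. For this I would go back into the proof of Theorem~\ref{main}, set $k=1$ throughout, and verify that every inequality used there in which $m$ is required to be large already holds once $m\ge 10$. This is the step I expect to be the main obstacle, since it forces one to track the constants through the argument rather than invoke Theorem~\ref{main} as a black box. Should a few intermediate values of $m$ not be covered this way, they would have to be disposed of by a direct inspection of digital sets of that cardinality; but I would expect the constants entering the proof of Theorem~\ref{main} to be tame enough that $m_0(1)=9$ comes out with no case checking, exactly as $m_0(0)=4$ does for the corollary in the case $k=0$.
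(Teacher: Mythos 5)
Your reduction is exactly the paper's: the corollary is Theorem \ref{main} specialized to $k=1$, the interval $2\le n\le(3+\sqrt{17})/2$ contains precisely the integers $n=2,3$, and the reformulation of $\xi(2)\ge m+3$ via $\alpha_t(A)$ together with the absence of nontrivial cosets in a digital set is the one already given in the introduction. The paper likewise supplies no separate proof of the corollary; the explicit thresholds are asserted inline in the proof of Theorem \ref{main} ($m\ge10$ for the first quadratic inequality in $n$, $m\ge9$ for the second).

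The step you defer, however, is the only step with content, and your expectation that it is routine is misplaced. For $k=1$ the minimizing $n$ satisfies $r\le0$, and the two inequalities to be checked are $mn^2\le(m+n)(m+n-1)$ and $\tfrac{n(n+1)}{2}-1\le 2n+\tfrac{n^2}{m}$. The first does give $n\le4$ for $m\ge10$ (at $m=10$, $n=5$: $250>210$), hence $\beta=(m+n+r)/m\le1.4<\sqrt2$ as needed. But the second, evaluated at $n=4$, reads $9\le 8+\tfrac{16}{m}$, which is \emph{satisfied} for every $m\le16$; it excludes $n=4$, and thereby forces $n$ into the range $n\le3$ where the hypotheses $\xi(2)\ge m+3$, $\xi(3)\ge m+4$ produce the contradiction, only once $m\ge17$. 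So the threshold $m\ge10$ does not drop out of the displayed inequalities the way you (and, as written, the paper) assume: the range $10\le m\le16$ requires either a separate argument, a sharper handling of $|A'|$ in the Pl\"unnecke--Kneser step, or an additional hypothesis at $n=4$. If you want to present this corollary as proved, you must actually carry out this verification and confront that discrepancy rather than treat it as bookkeeping.
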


\section{Proof of Theorem \ref{main}.}

We fix the following assumptions:  $q$ and $m$ are positive integers  composed of the same primes 
such that the exponent of each prime in $q$  is strictly greater than in $m$,  $p$ is the smallest prime divisor of $q$,
and $A$ is our digital set with  $|A| =m$.

First we consider adding a subgroup to $A$.

\begin{Lemma} \label{sg}
   Let $H$ be a subgroup of $\setZ_q$, $H \neq \{ \emptyset \}$,  $H \neq \setZ_q$.
   \begin{enumerate}[(i)]
   \item  For every $t$ we have
\begin{equation}\label{sg1}  |A \cap (H+t)| \leq \frac{\min (m, |H| ) }{p}  \leq \frac{\min (m, |H| )  }{2} \end{equation}
 \item For every nonempty subset $A'$ of $A$ we have
\begin{equation}\label{sg2} |A'+H| \geq  p |A'| \geq  2 |A'|  . \end{equation}
\item We have
\begin{equation}\label{sg3}|A+H| \geq ( m|H|, q) \geq
  \begin{cases}
    p \, \max(m, |H| ) \geq    (p-1)m + |H|, \\  \min \left(q, \frac{4}{3}m + |H|. \right)
  \end{cases}
\end{equation}
   \end{enumerate}
\end{Lemma}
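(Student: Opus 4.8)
The plan is to extract two divisibility facts from the same‑primes hypothesis and then derive all three parts from the digital‑set condition by pigeonhole. I would write $H=d\setZ_q$ with $d=q/|H|$ a divisor of $q$ (so $H\neq\setZ_q$ and $H\neq\{0\}$ read exactly $1<d<q$), set $g=\gcd(d,m)$, and first establish: $g\geq p$, and $\operatorname{lcm}(d,m)\leq q/p$. The first holds because $g\mid m$ and $g=1$ would force $d$ — a divisor of $q$ larger than $1$ — to share no prime with $q$, hence with $m$, which is impossible; so $g>1$, hence $g\geq p$. The second holds because $\operatorname{lcm}(d,m)\mid q$ and equality would force, prime by prime, the exponent of every prime in $d$ to reach its exponent in $q$, which is strictly larger than in $m$; that gives $d=q$, excluded.

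For part (i) I would use that reduction modulo $m$ is injective on the digital set $A$: every $x$ in a fixed coset $H+t$ satisfies $x\equiv t\pmod d$, hence $x\equiv t\pmod g$, so the residues mod $m$ of the points of $A\cap(H+t)$ all lie in one coset of $g\setZ_m$, a set of size $m/g$; by injectivity $|A\cap(H+t)|\leq m/g$, and trivially $|A\cap(H+t)|\leq|H|$. Then $m/g\leq m/p$ by the first fact, while $m/g=\operatorname{lcm}(d,m)/d\leq(q/p)/d=|H|/p$ by the second, which is exactly \eqref{sg1}. Part (ii) is then immediate: if $A'+H$ is a union of $s$ cosets of $H$, each contains at most $|H|/p$ points of $A$, hence of $A'$, so $s|H|/p\geq|A'|$ and $|A'+H|=s|H|\geq p|A'|$.

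For part (iii) I would write $A+H$ as a union of $r$ cosets of $H$; summing the bound of (i) over them gives $m\leq r(m/g)$, so $r\geq g$ and $|A+H|=r|H|\geq g|H|=\gcd(m|H|,q)$ (the last equality being $\gcd(|H|d,|H|m)=|H|\gcd(d,m)$). From $g\geq p$ one gets $|A+H|\geq p|H|$, and from $g=dm/\operatorname{lcm}(d,m)\geq dmp/q$ one gets $|A+H|\geq g(q/d)\geq pm$, so $|A+H|\geq p\max(m,|H|)\geq(p-1)m+|H|$. For the bound $\min(q,\tfrac43m+|H|)$ I would split on $g$: if $g\geq3$ then $|A+H|\geq\max(3|H|,2m)$, which is at least $\tfrac43m+|H|$ by a one‑line case distinction on whether $|H|\geq\tfrac23m$; the same computation works when $g=2$ and $r\geq3$. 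What remains is $g=2$ (hence $p=2$) with $r=2$: if $r=d$ then $A+H=\setZ_q$, while if $r<d$ then $d\geq3$, and here $\gcd(d,m)=2$ together with the same‑primes hypothesis forces $d$ to be a power of $2$ and then — since $d\geq4$ — forces the exponent of $2$ in $m$ to be $1$; so $|H|=q/d$ retains the whole odd part of $q$, which (the odd primes of $q$ being those of $m$, each with larger exponent) exceeds the odd part of $m$ by a factor at least $3$ as soon as $m$ has an odd prime factor, giving $|H|\geq\tfrac32m\geq\tfrac43m$ and hence $|A+H|=2|H|\geq\tfrac43m+|H|$.

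The only step I expect to require real care is this last case of part (iii): reading off from $\gcd(d,m)=2$ that $d$ is a $2$‑power, and then using the strict‑exponent hypothesis to force $|H|$ to be large relative to $m$. (The residual value $m=2$, where the $\tfrac43$‑bound genuinely fails, lies well outside the range of $m$ in which the lemma is applied.) Everything else is routine bookkeeping with divisors of $q$ together with the pigeonhole already built into part (i).
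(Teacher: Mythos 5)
Your proof is correct and follows essentially the same route as the paper's: the key counting bound $|A\cap(H+t)|\le m/\gcd(q/|H|,m)=\frac{m|H|}{\gcd(m|H|,q)}$ is exactly the paper's, parts (i)--(iii) then proceed by the same pigeonhole, and your endgame for the $\frac{4}{3}$-bound (splitting on $g=\gcd(q/|H|,m)$ and the number of occupied cosets) is only a dual bookkeeping of the paper's split on $(|H|,q/m)$ and on $|H|$ being a power of $2$. The one caveat you flag --- that the $\frac{4}{3}$-bound genuinely fails for $m=2$ --- is shared by the paper, whose proof silently invokes $m\ge 3$ at the corresponding step, and is harmless for every application of the lemma.
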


\begin{proof}
  Write $|H| =n$. We have $n|q$, $1<n<q$ and
   \[ H = \left\{ 0, \frac{q}{n}, \frac{2q}{n}, \ldots,  \frac{(n-1)q}{n} \right\} . \]
Some of these numbers are congruent modulo $m$, namely, if $m | (jq/n)$, then after $j$ steps the
residues modulo $m$ are repeating. Clearly
 \[ m \Bigm|  \frac{jq}{n} \Longleftrightarrow mn  \Bigm| jq  \Longleftrightarrow \frac{mn}{(mn,q)}   \Bigm| j .\]
Hence
 \[  |A \cap (H+t)| \leq \frac{mn}{(mn,q)}   = 
   \frac{m}{(m,q/n )}  = \frac{n}{(n,q/m )} . \]
 Since both $m$ and $q/m$ contain all prime divisors of $q$, both denominators are divisible by at least one prime factor of $q$,
hence both are $ \geq p$. This shows \kp{sg1}.

To show \kp{sg2}, let let $z$ be the number of cosets of $H$ that intersect $A'$. In each intersection we have
 \[  |A' \cap (H+t)| \leq |A \cap (H+t)| \leq n/p ,\]
so $|A'| \leq zn/p$ while $|A'+H| = zn$.

To prove \kp{sg3}, observe that as any coset of $H$ contains at most $m/(m,q/n)$ elements of $A$, 
hence $A$ must intersect at least $(m,q/n)$ cosets, which together have $n(m,q/n) = (mn,q)$ elements.
Since
\begin{equation}\label{lcm1} (mn, q) = n(m, q/n) \geq pn \end{equation}
 and
  \begin{equation}\label{lcm2} (mn, q) = m(n, q/m) \geq pm, \end{equation}
we immediately get the bound in the upper line. It is stonger than the lower line unless $p=2$.

If $p=2$, then \kp{lcm1} becomes
 \[ (mn, q) = n(m, q/n) \geq 2n, \]
and   \kp{lcm2} can be strengthened to
  \[  (mn, q) = m(n, q/m) \geq 3m ,\]
unless $(n, q/m)=2$. If both inequalities hold, then their arithmetic mean yields the stronger bound $(3/2)m+n$.

If the second inequality fails, then $n$ is a power of 2, say $n=2^j$. If $j=1$, then we have
 \[ (mn, q) = (2m, q) =2m \geq(4/3)m + n = \frac{4}{3}m + 2, \]
as $m\geq 3$. 

 If $j\geq2$, then
$q/m$ must contain 2 exactly in the first power, say
$q=2^sq'$, $m=2^{s-1} m'$ with odd $q', m'$.
 If $q'=m'=1$, then $q|mn$ and $|A+H| = q$. Otherwise $m' \geq 3$, consequently
$m \geq 3 \cdot 2^{s-1} \geq (3/2) n $ and
 \[ (mn, q) = 2m \geq \frac{4}{3}m + n .\]
\end{proof}

\begin{proof}[Proof of Theorem \ref{main}.]

We want to estimate $\xi(n)$ in the range $2 \leq n \leq q-m-k-1$. Let $n$ be the number in this interval where
$\xi(n)-n$ assumes its minimum, and if there are several such values, we take $n$ to be the smallest of them.
Write $\xi(n)-n=m+r$. If $r=k$, we are done, so we suppose that $r \leq k-1$.

 Let $B$ be a set such that $|B| =n$, $|A+B| = m+n+r$. We shall bound $n$ from above in
several stages.

 The set $D = G \setminus (A+B)$ satisfies $|D| = q-(m+n+r)$ and
 \[ (A-D) \subset G \setminus (-B ), \]
 hence $|A-D| \leq q-n = |D| + m+r$. The minimality of $|B|$ implies  $|B|\leq  |D|$, that is,
  \[ n \leq \frac{q-(m+r)}2 . \]
The same argument can be used to show that
 \[ \xi(n)=q \ \text{for } q-m+1 \leq n \leq q \]
(as already mentioned) and
\begin{equation}\label{x1}
  \xi(n)=q-1  \ \text{for } q-m-k-1 \leq n \leq q-m. \end{equation}
This shows that the range $2 \leq n \leq q-m-k-1$ in the Theorem is best possible.

Next we show that $A+B$ is aperiodic. To this end we 
 use \emph{Kneser's theorem:} for any finite sets $A,B$ in a commutative group $G$ we have
 \[ |A+B| \geq |A+H| + |B+H| - |H| ,\]
 where
  \[ H = \{ t\in G: A+B+t=A+B \} ,\]
the group of periodes of $A+B$. If $H= \setZ_q$, then we get $|A+B| \geq |A+H|=q$ and we are done.
If  $H \neq \{ \emptyset \}$,  $H \neq \setZ_q$, then we apply Lemma \ref{sg} to conclude
 \[ |A+H| \geq  \frac{4}{3}m + |H|\]
and so
\[ |A+B| \geq\frac{4}{3}m  + |B+H| \geq\frac{4}{3}m  + |B| \geq m +k+n \]
as wanted (here we use the bound $m \geq 3k$). 

Next we show that $B$ is a Sidon set, that is, for every $t \neq 0$ we have
 $ | B\cap (B+t)| \leq 1$.
 Suppose the contrary. Fix a $t$ such that $ | B\cap (B+t)| \geq 2$ and write
  \[ B_1 = B \cap(B+t), \ B_2 = B \cup (B+t) .  \]
  These sets satisfy
   \[ |B_1| + |B_2| = 2 |B| = 2n, \]
    \[ A+B_1 \subset (A+B) \cap (A+B+t) , \]
 \[ A+B_2 = (A+B) \cup (A+B+t) , \]
consequently 
 \begin{equation}\label{s1} |A+B_1| + |A+B_2| \leq 2 |A+B| = 2(m+n+r). \end{equation} 
$B_1$ must be a proper subset of $B$, since otherwise $B$ and a fortiori $A+B$ would be periodic.
Consequently we have
\begin{equation}\label{s2}
  |A+B_1| > m + |B_1| + r \end{equation}
by the minimality of $|B|$. The set $B_2$ satisfies
 \[ |B_2| =  2n- |B_1| \leq 2n-2 \leq q-(m+r+2). \]
If $ |B_2| \leq q-m-k-1$, then we have
 \begin{equation}\label{s3} |A+B_2| \geq  m + |B_2| + r \end{equation}
 by the definition of $r$. If
  \[    q-m-k-1 <   |B_2| \leq q-(m+r+2),  \]
  then
   \[  |A+B_2| \geq q-1 >  m + |B_2| + r \]
   by \kp{x1}, so \kp{s3} holds anyway. By adding \kp{s2} and \kp{s3} we obtain
    \[ |A+B_1| + |A+B_2| > 2m +|B_1| + |B_2| + 2r =  2(m+n+r), \]
which contradicts \kp{s1}.

Since $B$ is a Sidon set, we have (see \cite{Ruzsalineq})
 \[ |A+B| \geq \frac{mn^2}{m+n-1} .\]
This inequality holds for every set of $m$ elements and it is nearly best in this generality; to use the special properties of $A$ 
we will need another approach.

Comparing this lower bound with the value $m+n+r$ yields the inequality
 \[ mn^2 \leq (m+n+r) (m+n-1) \leq (m+n+k-1) (m+n-1) .\]
 This is a quadratic inequality in $n$ and it gives the bound
  \[ n\leq  \frac{b+\sqrt{b^2 +4ac}}{2a}, \ a=m-1, \ b=2m+k-2, \ c = (m-1)(m+k-1). \]
    For large $m$ this is asymptotic to $\sqrt{m}$; in particular, there is an $m_0$ depending on $k$ such that
     \[ \beta = \frac{|A+B|}{|A|} = \frac{m+n+r}{m} < \sqrt{2} \]
for $m>m_0$.  Such a bound is easily found in the particular cases $k=0,1$;
if $k=0$, it holds for $m \geq 5$, if $k=1$, it holds for $m\geq 10$.

Pl\"unnecke's theorem (see \cite{Ruzsapluennecke}) implies the existence of a nonempty subset $A'$ of $A$ such that
 \begin{equation}\label{b2} |A'+ 2B | \leq \beta^2 |A'| < 2 |A'| . \end{equation}
 We shall compare this to the Kneser bound
  \[ |A'+ 2B | \geq |A'+H| + |2B+H| - |H|, \]
  where $H$ is the group of periodes of  $A'+ 2B$. If $H$ is a nontrivial subgroup, then
   \[ |A'+H| \geq 2 |A'| \]
by \kp{sg2}; this also holds trivially if $H=\setZ_q$, and this contradicts \kp{b2}. 

If $H=\{ \emptyset \}$, then Kneser's bound reduces to
 \[ |A'+ 2B | \geq |A'| + |2B| - 1 = |A'| + \frac{n(n+1)}2 -1 , \]
 as $|2B| = n(n+1)/2$ by the Sidon property. A comparison with the upper estimate \kp{b2} gives
 \[ |A'|+ \frac{n(n+1)}2 -1 \leq \left(\frac{m+n+r}{m} \right)^2 |A'|,   \]
 \[ \frac{n(n+1)}2 -1 \leq  |A'|\left( \left(\frac{m+n+r}{m} \right)^2 -1 \right)   \]
  \[  \leq  m \left( \left(\frac{m+n+r}{m} \right)^2 -1 \right) = \frac{(2m+n+r)(n+r)}{m} \leq   \frac{(2m+n+k-1)(n+k-1)}{m}. \]
This is again a quadratic inequality in $n$ and it gives the bound
  \[ n\leq  \frac{b+\sqrt{b^2 +4ac}}{2a}, \ a=m-2, \ b=3m+4k-4, \ c =2m+ 2(k-1)(2m+k-1). \]
As $m \to \infty$, this bound tends to $\left(3+\sqrt{16k+1}\right)/{2}$. The bound $m_0$ after which we can claim this
bound for $n$ depends on the fractional part of the square root inside, but it is easily found in the particular cases $k=0,1$;
if $k=0$, it holds for $m \geq 4$, if $k=1$, it holds for $m\geq 9$.
\end{proof}

\begin{section}{Arithmetic progression structure of sets}

We are interested in studying sets $A \subseteq \Z_q$ containing no nontrivial cosets such that $\xi_A(2)=\xi_A(3).$

If such an equality were to hold, then there exist non zero elements $d_1\neq d_2$ such that
$$|A| + k = \xi_A(3) = |A + \{ 0, d_1, d_2 \}| \geq |A+ \{ 0, d_1 \}| \geq \xi_A(2) = \xi_A(3),$$
so that $|A+\{ 0, d_1, d_2\}| = |A+ \{0, d_1\}| =|A+ \{0, d_2\}|=|A+ \{d_1, d_2\}|$. 
In particular, this tells us that the set $A$ can be written as the union of $k$ arithmetic progressions of difference $d_1$ or $d_2$, and there exists 3 distinct elements $x_i \in \Z_q$ such that $$  \bigcup_{i=1}^3 (A+x_i) = (A+x_a )\cup (A+x_b) $$ for any choice of distinct $a, b \in \{1,2,3\}$.

In the sequel we will study these two problems.

Given integers $a, b$ we let the interval $[a,b] \subseteq \Z_q$ be the image of $[a, b']$ under the natural projection $\varphi: \Z \to \Z_q$, where $b'$ is the minimal integer such that $b' \equiv b$ mod $q$ and $a \leq b'$.

 Let $|x| = \min\{|x+kq| : k \in \Z\}$ for $x\in \Z_q$ be the seminorm measuring the distance of an element in $\Z_q$ from zero.

If $\xi_A(2) =|A|+k$, is the decomposition of $A$ as the union of $k$ arithmetic progression unique up to a sign?

In other words, can there be two proper decomposition of $A$ as a
$$A=\cup_{i=1}^k P_i = \cup_{i=1}^k Q_i,$$ 
$$P_i= \{a_i, a_i+d_1, \dots, a_i + k_i d_1 \}, \quad Q_i = \{a'_i, a'_i + d_2, \dots, a'_i + k_i d_2 \}$$
 with $d_1 \neq \pm d_2$, $d_1, d_2 \in (-q/2, q/2]$?

% Obviously, since we are working modulo a prime, such a double decomposition exists for $A$ if and only if a similar one exists for its complement $A^c$.

If $A$ is an arithmetic progression of difference $d$ itself, so that $k=1$, the only possibility is clearly $d_1= \pm d_2$.

Suppose now $k=2$. Very small (or, by taking their complement, very large) sets $A$ with $|A| \leq 4$ may have multiple representation as union of two arithmetic progression, as happens for sets of the form $A=\{ a, a+d, b, b+d\}$.

On the other hand, we can easily provide examples of different minimal arithmetic progression decompositions if the ratio $|d_1/d_2|$ or $|d_2/d_1|$ is less or equal to $2$, as happens for sets of the form $A=[a, b] \cup \{ b+2\}$ or $A=\{a-2\} \cup [a, b]$

The following Theorem states that these are the only kinds of sets having multiple minimal arithmetic progression decompositions.

% \bteo\label{2apprime}
% Let $A\subseteq \Z_q$, $q>100$, $4<|A|<q-4$. If $\xi_A(2)=|A+\{ 0, d \}| =|A|+2$, then the only elements  $x \in \Z_q$ with $|A+\{0, x\}| = |A|+2$ are $\pm d$, unless $A=[a, b] \cup \{ b+2\}$ or $A =\{a-2\} \cup [a, b]$ for some $a, b \in \Z_q$.
% \eteo

\bteo\label{k2mv1}
Let $A \subseteq \Z_q$, $4 < |A| < q-4$. Assume that $q$ is odd, $q>100$ and $A$ is not contained in a coset of any nontrivial subgroup of $\Z_q$.
% \begin{equation}\label{cosetint}|A \cap (H+t)|< |H|-4
% \end{equation}
%  for every coset $H+t$ of $\Z_q$.
If $\xi_A(2)=|A+\{ 0, d \}| =|A|+2$, then the only elements  $x \in \Z_q$ with $|A+\{0, x\}| = |A|+2$ are $\pm d$, unless $A$ is a dilation of sets of the form $[a, b] \cup \{ b+2\}$ or $\{a-2\} \cup [a, b]$ for suitable $a, b \in \Z_q$.
\eteo

\bdimo

{\bfseries Case 1}: $(d_1, q)=(d_2, q)=1$.

Let $A$ be a set with $\xi_A(2)=|A|+2$ having a double decomposition as the union of two proper arithmetic progression of difference $d_1$ or $d_2$. Dilating $A$ by $d_2^{-1}$ we can assume that $A$ is the union of two disjoint intervals in $\Z_q$. Also, by taking the complementary of $A$, we can assume $|A| < q/2$. (This may fail if the differences are not coprime to $q$; then possibly the complement is the union of the same number of arithmetic progressions and some cosets of the subgroup generated by the difference.)

Let $A=I_1 \cup I_2 = P_1 \cup P_2$ where $P_i$ are arithmetic progressions with common difference $1<d<q/2$ and $I_i = [a_i, b_i]$.

Let $d=\frac{q+1}{2}-x$ for a positive integer $x<\frac{q-1}{2}$. Either $d^{-1}$ or $-d^{-1}$ must be conguent to $\frac{q+1}{2}-y$ for a positive integer $y<\frac{q-1}{2}$. Then 
$$\pm4 \equiv 4d(\pm d^{-1}) \equiv (2x-1)(2y-1)\mbox{\quad mod $q$}, $$ which implies that either $x$ or $y$ must be greater than $\frac{\sqrt{q-4}+1}{2} \geq \frac{\sqrt{q}}{2}$.

Hence we can also assume $1< d<(q-\sqrt{q})/2$.

We say that a progression $P_i = \{ a+kd : k =0, \dots, l \}$ jumps from $I_1$ to $I_2$ at $l$ if $a+(l-1)d \in I_1 \cap P_i$ and $a+ld \in I_2 \cap P_i$.

We now split the proof into two cases.

{\bfseries Subcase 1}: $d=2$.

Since $|A|<q/2$, neither $P_1$ nor $P_2$ can jump from $I_1$ to $I_2$ or viceversa more than once. Then it's easy to see that the only possibility is that $A$ behaves as in the statement of the theorem.

{\bfseries Subcase 2:} $d>2$.

Since $A < q/2$ there must be a gap between the intervals $I_1$ and $I_2$ of length $g>q/4$.
 Let $|I_1| \leq |I_2|$ and $a_1 - b_2 - 1 \equiv g$ mod $q$, so that $|I_2| > 2$ and hence $I_2$ contains $3$ consecutive elements. Then at least one of the $P_i$'s must jump from $I_2$ to $I_1$ and then to $I_2$ again, implying that $d > g >q/4 > |I_1|$, and that at least one element $x'$ in $I_1$ satisfies $ x' \pm d \in A$.

There are at most 4 elements $x \in A$, the starting and ending points of the $P_i$'s, such that $\{ x+d, x-d \} \not\subseteq A$. So we can find an element $y \in [a_1, a_1+4] \cap A \subseteq I_1$ such that $y \pm d \in A$, either by taking $y=x'$ if $|I_1| < 5$ or $y$ as a point in the middle of an arithmetic progression if $|I_1| \geq 5$.

Since $|I_1| < d$ we have $y \pm d \in I_2$, and so the interval $[y+d, y-d]$ must be contained in $I_2$.

Take now an element $z \in [y-d-7, y-d-5] \subseteq [y+d, y-d]$ which is not the ending element of $P_1$ or $P_2$, so that $z + d \in A$, to obtain a contradiction since $z+d \in [y-7, y-5] \subseteq [a_1 -7, a_1 - 1]\subseteq A^c$. (Here we need that $2d+7 \leq  q$, which follows from the assumption on the size of $q$ and the above inequality for $d$.)
\edimo

To proceed to the case of not coprime differences we need a simple lemma which allows us to normalize the differences of the arithmetic progressions.

\blem\label{redd2}
For arbitrary integers $a, q$ there exists an integer $a'$, $a' \equiv a$ mod $q$ and $a' = a_1 a_2$, with $a_1 | q$ and $(a_2, q)=1$
\elem
\bdimo
Let $I = \{ p : p \mbox{ prime}, v_p(a) = v_p(q)\}$.

Define $a':= q\prod_{p \in I} p + a$. Then $a' \equiv a$ mod $q$, $v_p(a') = v_p(q)$ for all primes $p \in I$ and $v_p(a') = \min(v_p(a), v_p(q)) \leq v_p(q)$ for all primes $p \not\in I$.
\end{proof}

% \bteo\label{k2mv1}
% Let $A \subseteq \Z_q$, $4 < |A| < q-4$ and 
% \begin{equation}\label{cosetint}|A \cap (H+t)|< |H|-4
% \end{equation}
%  for every coset $H+t$ of $\Z_q$.
% If $\xi_A(2)=|A+\{ 0, d \}| =|A|+2$, then the only elements  $x \in \Z_q$ with $|A+\{0, x\}| = |A|+2$ are $\pm d$, unless $A$ is a dilation of sets of the form $[a, b] \cup \{ b+2\}$ or $\{a-2\} \cup [a, b]$ for suitable $a, b \in \Z_q$.
% \eteo

% \begin{Rem}
% We cannot proceed as in the proof of Theorem \ref{2apprime} since not every element of $\Z_q$ generates the whole additive group.
% \end{Rem}
\bdimo

Let $q=\prod p_i^{r_i}$ be the decomposition of $q$ as a product of powers of distinct primes.

Let $A=P_1 \cup P_2 = Q_1 \cup Q_2$, with $P_i$'s arithmetic progressions of difference $d_1$ and $Q_i$'s of difference $d_2$, with $P_i = \{\alpha_i, \alpha_i + d, \dots \}$.

% {\bfseries Case 1}: $(d_1, q)=(d_2, q)=1$.

% In this case we can proceed just as in the case of $q=p$ prime, since also $A^c$ can be written as the union of two arithmetic progressions of differences $d_1$ or $d_2$.

{\bfseries Case 2}: $(d_1,q) = 1 < (d_2, q)$.

After a dilation we can assume $d_1=1$, and so there are three consecutive elements $\{\gamma, \gamma +1, \gamma+2 \}$ contained in $A$.

However, since $2\nmid q$, we have that $d=(d_2, q) > 2$ and so the union of $Q_1$ and $Q_2$ can cover at most two of these three elements, which is a contradiction.

{\bfseries Case 3}: $(d_1, q), (d_2, q) > 1$.

After a dilation, thanks to Lemma \ref{redd2}, we can assume $d_1 | q$.

If $\alpha_1 \equiv \alpha_2$ mod $d_1$ then $A$ is contained in a single coset of the subgroup generated by $d_1$, contrary to the assumption.

If $\alpha_1 \not\equiv \alpha_2$ mod $d_1$ then $P_i = \{x \in A : x\equiv \alpha_i \mbox{ mod $d_1$}\}$.

If $d_1 | d_2$ then we also get $Q_i= \{x \in A : x\equiv \alpha_{\varphi(i)} \mbox{ mod $d_1$}\}$ for a permutation $\varphi:\{1,2\} \to \{1,2\}$, and the result follows immediately.

If $d_1 \nmid d_2$ then, letting $Q_1=\{q_1, q_2, q_3, \dots \}$ be an arithmetic progression with at least three elements, we have $q_1 + d_2 \not\equiv q_1$ mod $d_1$ and so $q_1 + 2d_2 \equiv q_1$ mod $d_1$, which implies that $2|q$,  again a contradiction.
\edimo

Trying to prove results similar to Theorem \ref{k2mv1} for higher $k$ becomes a harder task, since new families of exceptions have to be considered, as  already shown for $k=3$ by the  set
$$A=[1,a] \cup \left(\left[(p-1)/2,(p-1)/2+a-1\right] \setminus \left\{ 1+(p+1)/2 \right\}\right),$$ which is the union of 3 intervals as well as of 3 arithmetic progressions of difference $d=1+\frac{p+1}{2}$.

For $k>2$ we also still find the same families of sets having more than one decomposition which we found for $k=2$: sets $A$ with $|A| \leq k^2$ or with $|d_1/d_2| \leq k$. 

In the former case, $|A| \leq k^2$, there exists an arithmetic progression in its decomposition having cardinality less or equal than $k$, so after removing its points from $A$ we obtain a set $\widetilde{A}$ with $|\widetilde{A}| \geq |A|-k$ and $\xi_{\widetilde{A}}(2) - |\widetilde{A}| \leq k-1$.

In the latter case, $|d_1/d_2| \leq k$, after multiplying the set $A$ by $\pm d_2^{-1}$, we have that $A=I_1 \cup \dots \cup I_k = P_1\cup \dots \cup P_k$ for intervals $I_i$'s and arithmetic progressions $P_i$'s of difference $d\leq k$. 
Since at least one of these arithmetic progressions must jump from one interval to another there exists a gap between two intervals of length less or equal than $k$, and so, by adding those points to $A$ we obtain a set $\widetilde{A}$ with $|\widetilde{A}| \leq |A|+k$ and $\xi_{\widetilde{A}}(2) - |\widetilde{A}| \leq k-1$.

The common point between these two kinds of sets and the multitude of other types of examples one can produce as $k$ grows, is that even though they both are the union of $k$ $d$-arithmetic progression, they are actually obtained by sets $\widetilde{A}$ which are the union of $k-1$ $d$-arithmetic progressions by removing or adding up to $k$ elements.

To exclude these sets, we give the following definition.

\begin{Def}\label{def1}
$A$ has $k$ stable components if $\xi_A(2) = |A| + k$, and for any $d$ such that $|(A+d) \setminus A|=k$, any set $\widetilde{A}$ obtained by $A$ by removing or adding up to $k$ elements satisfies $|(\widetilde{A}+d) \setminus \widetilde{A}| \geq k$.
\end{Def}

Moreover, if we work in the composite number modulus case, new sets having multiple representation as union of a minimal number of arithmetic progressions can be found, because of the presence of nontrivial cosets in this setting.

Of course, the union of $k$ disjoint cosets has a lot of representations as the union of $k$ arithmetic progressions, but it is not hard to find other less trivial sets which satisfy this property.

For example, for suitable $k,q$, $k\mid q$, 
\begin{equation}\label{mcontro}A = [0,2k-1] \bigcup_{i=1} ^{k-1} [q/k+i, q/k+(k+1)+i]\subseteq \Z_q
\end{equation}
 is a set of $k$ stable components which is not the union of cosets but still is the union of either $k$ intervals or $k$ arithmetic progressions of difference $d=q/k +1$.

Nevertheless, this set $A$ has high density in some coset of $\Z_q$, namely $\langle q/k \rangle$.

In the following Theorem we show that the essential uniqueness of the decomposition of a set into $k$ arithmetic progressions still holds for sets of $k$ stable components and with low density into any coset of $\Z_q$.

Moreover, it will be clear from the proof that the only sets of $k$ stable components with such multiple decompositions will be of the same kind as the set in \eqref{mcontro}.
\bteo
Let $A \subseteq \Z_q$ be the union of $k$ arithmetic progressions of difference $d_1$ and $d_2$, $|A\cap(H+t)|<|H|/2$ for any nonzero coset $H+t$ of $\Z_q$ , and $A$ has $k$ stabel components.
Then $d_1 = \pm d_2$.
\eteo
\bdimo
Since we are going to prove $d_1 = \pm d_2$, and every arithmetic progression of difference $d$ is also an arithmetic progression of difference $-d$, during the course of the proof we are going to choose suitable signs for $d_i$ in order to simplify the notations.

Let $A=P_1 \cup \dots \cup P_k = Q_1 \cup \dots \cup Q_k$ with $P_i$'s being arithmetic progressions of difference $d_1$ and $Q_i$'s of difference $d_2$.

We denote by $S_i$ and $E_i$, $i=1, 2$ the starting and ending points of the arithmetic progressions of difference $d_i$ forming $A$, i.e.
$$S_i = \{ x \in A: x-d_i \not\in A\}, \qquad E_i= \{ x \in A : x+d_i \not\in A\},$$
with $|S_i|=|E_i|= k$.

Given $x, y$, we will write $x \sim_i y$ for $i=1, 2$ if $x, y \in A$ and they both belong to the same arithmetic progression of difference $d_i$.

Since $A$ has $k$ stable components, the following properties hold:
   \begin{enumerate}[(i)]
\item\label{pr1} $|P_i|, |Q_i| \geq k+1$ $\forall i=1,\dots,k$, for if otherwise, by removing a short arithmetic progression, we would obtain a contradiction with Definition \ref{def1}.

\item\label{pr2} If $P_i = \{ a + l d_1: l=0, \dots, M_i-1\}, P_j = \{ a + (M_i+ l) d_1 : l = N, \dots, N+ M_j\}, N >0,$ are two different components contained in the same coset $a + \left< d_1 \right>$, then $N \geq k+1$, for otherwise, by adding the elements $\{ a + ld_1 : l= M_i, \dots, M_i +N-1\}$ to $A$ we would obtain a contradiction with Definition \ref{def1}.
A similar statement holds for $Q_i, Q_j$ and $d_2$ instead of $P_i, P_j$ and $d_1$.

\item\label{pr3} $\forall i \exists j : (P_i +  d_2) \cap A \subseteq P_j$. In fact, if  $P_i \subseteq a + \left< d_1 \right>$ and $(P_i +d_2) \cap A \cap P_{k_l} \neq \emptyset$ for two different components $P_{k_1}$ and $P_{k_2}$, then we have $P_i + d_2 \subseteq a + d_2 +  \left< d_1 \right>$, which implies that both $P_{k_1}$ and $P_{k_2}$ are contained in the same coset of $\left< d_1 \right>$. Then, because of \eqref{pr2}, the set $P_i + d_2$ contains at least $k+1$ elements not belonging to $A$, and hence $|E_2| \geq k+1$, a contradiction.
A similar statement hold for $Q_i$ and $d_1$ instead of $P_i$ and $d_2$.

\item\label{pr4} $\forall i \exists j : (P_i -  d_2) \cap A \subseteq P_j$ and $\forall i \exists j : (Q_i -  d_1) \cap A \subseteq Q_j$, by an argument similar to \eqref{pr3}.
\end{enumerate}

Thanks to Lemma \ref{redd2} we can assume, after a dilation, $d_1, d_2 \in [0, q-1]$, $d_2 \mid q$.

Let $d=(d_1, d_2)$, $d_i = d_i' d$ for $d=1, 2$, $q = dq'$ and $\mathcal{A}_i = \{x \in A : x \equiv i \mbox{ mod $d$}\}$.

Clearly, if $P_j \cap \mathcal{A}_i \neq \emptyset$, then $P_j \subseteq \mathcal{A}_i$, and the same holds for the $Q_j$'s, so that every $\mathcal{A}_i$ is the union of $r_{1, i}$ $d_1$-arithmetic progressions and $r_{2,i}$ $d_2$-arithmetic progressions.

We are going to show that the ratio $r_{1,i}/r_{2,i}$ is constant for every $i$.

Let $A_i = \frac{\mathcal{A}_i - i}{d} \subseteq \Z_{q'}$.

Clearly every set $A_i$ inherits from $A$ the same stability properties (relative to $k$) and the condition of density into cosets.

We use the same notation above for subsets of $\Z_{q'}$.

\bclaim $ r_{2, i} \geq d_2'$.
\eclaim
\bdimo[Proof of claim]
Since $d_2'|q'$, $x\sim_2 y$ implies $x \equiv y$ mod $d_2'$.

Given $s \in S_1$, if by contradiction  $m' >d_2' > r_{2,i}$ then the set $B=\{ s, s+ d_1', \dots, s + r_{2,i} d_1' \} \subseteq A_i$ has cardinality $r_{2,i}+1$ since $r_{2,i} \leq k$.

For $j \in [0, r_{2,i}] \subseteq [0, d_2'-1]$, $jd_1' \equiv 0$ mod $d_2'$ can only happen for $j=0$ by the coprimality of $d_1'$ and $d_2'$.

Hence $B$ intersects $r_{2,i}+1$ distinct $d_2'$-arithmetic progression, which leads to the contradiction.
\edimo

Let $X=\{ x \in [k]: xd'_1 \equiv 0 \mbox{ mod $d'_2$}\}$.

From $d'_2 \leq r_{2,i}\leq k$ we get $X \neq \emptyset$.

For every $x \in X$ let $\beta_+(x)$ be the minimal positive integer such that $xd'_1 \equiv \beta_+(x)d'_2$ mod $q'$, and  $\beta_-(x)$ be the minimal positive integer such that $-xd'_1 \equiv \beta_-(x)d'_2$ mod $q'$. Let $\beta(x) = \min (\beta_+(x), \beta_-(x))$ and $\beta(\alpha) = \min_{x\in X} \beta(x), \alpha \in [k]$.

After changing $d'_1$ with $-d'_1$ we can assume $\beta(\alpha) = \beta_+(\alpha)$.

Let $S_1 = \{ s_1, \dots, s_{r_{2,i}}\}$. For every $i$ define $l_i$ to be the minimal integer such that $s_i + l_id'_1 \sim_2 s_i$. Clearly $l_i \in X$, and one of the following must happen, according to which one between $\beta_+(l_i)$ and $\beta_-(l_i)$ is minimal:

   \begin{enumerate}[(i)]
\item $s_i + ld'_2 \in A_i$ for $l \in [0, \beta_+(l_i)]$ 
\item $s_i - ld'_2 \in A_i$ for $l \in [0, \beta_-(l_i)]$ 
\end{enumerate}

\boss\label{oss123}

Because of \eqref{pr3} all the elements $x$ such that $x \sim_{1,2} s_i$ are of the form $s_i + l l_i d'_1$ for some $l \geq 0$. In particular, $\beta(\frac{x-s_i}{d'_1}) > \beta(l_i)$, otherwise $|A_i \cap (s_i  + \langle d'_2\rangle)| > \frac{|\langle d'_2\rangle|}{2}$.

Moreover, all those elements $x$ belong to the same semicircle $[s_i, s_i + m'/2)$ or $(s_i - m'/2, s_i]$.
\eoss

Suppose $\beta(l_i) > k$ for all $i$. Suppose $\beta(l_1) = \beta_+(l_1)$ and $\beta(l_1) = \min_{i=1, \dots, r_{2,i}} (\beta(l_i))$. Then the set $\{ s_1, s_1  + d'_2, \dots, s_1 + l_1 d'_1 = s_1 + \beta(l_1)d'_2 \} \subseteq A_i$ intersects at least $k+1$ different $d'_1$-arithmetic progression, leading to a contradiction. 
A similar argument works if $\beta(l_1) = \beta_-(l_1)$).

But then, since $\beta(l_1), l_1 \leq k$, we get that for every $j$, $s_j + l_1 d'_1 = s_j + \beta(l_1) d'_2 \sim_{1,2} s_j$. Moreover, Remark \ref{oss123} tells us that $l_1=l_j = \alpha$ for all $j$. 

Split the set $A_i$ into $M$ equivalence classes under the relation $P_{j_1} \sim P_{j_2}$ if there are $p_1 \in P_{j_1}$, $p_2 \in P_{j_2}$, with $p_1 \sim_2 p_2$. This is well defined by \eqref{pr3}.

Each equivalence class is composed by $\alpha$ $d'_2$-arithmetic progressions, so that $r_{2,i} = M \alpha$.

If $x, x+ \alpha d'_1 \in A_i$, there does not exists a $y \in \{ x+ l d'_2, l \in [0, \beta(\alpha)) \}$ with $y \sim_1 x$, and hence $k \geq r_{1,i} \geq \beta(\alpha)$. On the other hand, we already know that $x  \sim_1 x+ \alpha d'_1$, and so $r_{1,i} = M \beta(\alpha)$.

In particular, the ratio $r_{1,i}/r_{2,i} = \beta(\alpha) / \alpha$, a constant not depending on $i$.

Since $A$ is the union of $k$ $d_1$-arithmetic progressions and $k$ $d_2$-arithmetic progressions, we must have $\beta(\alpha)=\alpha$.

We now show that this leads to $d'_1 = d'_2$, which concludes the proof since, after dilating the set $A$ so that $d_2 \mid q$, we did already choose between $d_1$ and $-d_1$ in order to simplify the notation.

Going back to $A_i$ we have $\alpha d'_1 \equiv \alpha d'_2$ mod $q'$, and so, for $D=(\alpha, q')$ we get $\frac{q'}{D} \mid \frac{\alpha}{D}(d'_1 - d'_2)$ and so $d'_1 = d'_2 + j \frac{q'}{D}$ for some $j\geq 0$.

Assume by contradiction that $D> j > 0$.

We already know that $B=\{s_1, s_1+ d'_2, \dots, s_1 + \alpha d'_2 = s_1 + \alpha d'_1 \} \subseteq A_i$.

Let $D'$ be the additive order of $j\frac{q'}{D}$ in $\Z_{q'}$, $D' \leq  D \leq \alpha \leq k$.

Then $s_1 + D' d'_1 = s_1 + D' d'_2 \in B$ and $s_1 + D' d'_1 \sim_2 s_1$, so that $D' = \alpha$

Moreover, $s_1 + l d'_1 \in A_i$ for $0\leq l \leq \alpha$.

By \eqref{pr1} and $\alpha \leq k$ we have that at least one between 
$$ld'_1 - ld'_2 = l j \frac{q'}{D} \quad \mbox{ or }\quad ld'_1 + (\alpha - l) d'_2 = \alpha d'_2 + l j\frac{q}{D}$$ belong in $A_i$, and so at least one of the two cosets $\langle j\frac{q'}{D}\rangle$ and $\alpha + \langle j\frac{q'}{D}\rangle$, both having cardinality $\alpha$, intersects $A_i$ in more than half of its elements, which leads to a contradiction with our hypothesis of low density in cosets.

Hence $j=0$ and $d'_1 = d'_2$.

\edimo
\end{section}

\begin{section}{Sets $A$ with $\xi_A(2)=\xi_A(3)$}
Let $A \subseteq \Z_q$ be a set which does not contain any non trivial cosets, with $|A|=m$, $\xi_A(2)=\xi_A(3)$. Then there are $d_1 \neq d_2$ such that

\begin{equation}\label{eqmodq}A+ \{0,d_1,d_2\}=A+ \{0,d_1\}=A+ \{0,d_2\}=A+\{d_1,d_2\}, \end{equation}

After a dilation, applying Lemma \ref{redd2}, we can assume $d_1, d_2 \in [0, q-1]$ and $d_1 | q$. Let $H = \langle d_1 \rangle$ be the subgroup generated by $d_1$, so that $|H|=q/d_1$.

As usual, write $A=P_1 \cup \dots \cup P_k = Q_1 \cup \dots \cup Q_k$ as the union of $k$ $d_1$-arithmetic progressions $P_i$'s as well as $k$ $d_2$-arithmetic progressions $Q_i$'s, with 
$$P_i = \{ a_i + j d_1; j = 0, \dots, j_i \}, \quad a_i + j_i d_1= b_i,$$
$$Q_i = \{ \alpha_i + l d_2; l = 0, \dots, l_i \}, \quad \alpha_i + l_i d_2= \beta_i$$

Since
$$A+ \{ 0,d_1\} = A \amalg \{ b_i + d_1\}_{i=1, \dots, k} $$
$$A+ \{ 0,d_2\} = A \amalg \{ \beta_i + d_2\}_{i=1, \dots, k} $$

we have
\begin{equation}\label{bbeta}\{ b_i + d_1\}_{i=1, \dots, k}  = \{ \beta_i + d_2\}_{i=1, \dots, k}.\end{equation}

Suppose that set $A$ has non empty intersection with $z$ cosets of $H$.

Let $\{G_i\}_{i=1, \dots, k}$ be the set of maximal $d_1$-arithmetic progressions contained in those $z$ cosets of $H$ such that $G_i \subseteq A^c$.
In particular, after a reordering, we can assume $G_i = \{x_i + h d_1, h = 0, \dots, h_i\}$, with $x_i = b_i + d_1$ and $x_i + h_i d_1 = a_{\varphi(i)}-d_1$ for a permutation $\varphi: [k] \to [k]$.

Note that $a_i \in A+\{d_1, d_2\} \setminus A+d_1$, for otherwise $A$ would contain a full coset of $H$. 

Hence 
\begin{equation}\label{aia}a_i - d_2 \in A\end{equation}
and from \eqref{bbeta} and \eqref{aia} we deduce that
$$(G_i - d_2) \cap A = \{ \beta_{\tau(i)}\}$$
for a permutation $\tau: [k] \to [k]$. Moreover, either $|G_i|=1$ or $(G_i - d_2) \cap A^c = G_j$ for another $G_j$ with $|G_j| = |G_i|-1$.

We can then define a partial order $\leq$ on the $G_i$'s by $G_a \leq G_b$ if and only if $\exists i \geq 0$ such that
$$G_a = (G_b - i d_2) \cap G_b - i(d_2-d_1).$$

A $G_i$ which is maximal for this partial order satisfies $G_i + d_2 \subseteq \{ \alpha_i\}_{i=1, \dots, k} \subseteq A$, and so $|G_i| \leq k$, leading to
\begin{equation}\label{eqmodq}|A| \geq z|H| - \frac{k(k+1)}{2}.\end{equation}

We have then proved the following:

\bteo\label{teo01dq}
Let $A \subseteq \Z_q$  be a set not containing any nontrivial cosets and which satisfies 
$$\xi_A(2)=\xi_A(3).$$
Then there exists a $d_1|q$ such that $A$ intersects $z$ cosets of $H=\langle d_1 \rangle$ and, after a dilation, $A$ is of the form 
$$\Z_q \setminus \left(\coprod_i \mathcal{G}_i \coprod   \coprod_{j=1}^{d_1-z} (t_j + H )\right), $$
 where $\mathcal{G}_i$ are  chains $\mathcal{G}_i = \{ \{ g_i\} = G_{i, 1} \leq \dots \leq G_{i, j_i}\}$ with 
   \begin{enumerate}[(i)]
\item\label{cco1} $|G_{i, j_i}| \leq \xi_A(3) - |A|$, 
\item $|G_{i, j-1}| = |G_{i, j}| -1$,
\item $g_i -d_2 \in A$, 
\item\label{cco4} $(G_{x,y }+\{0,d_1\}) \cap (G_{w,z} + \{0, d_1\}) = \emptyset$ for $(x,y)\neq(w,z)$.

\end{enumerate}
\eteo

Restricting ourselves to the case $q=p$ prime, it is an interesting question to study the minimal cardinality of $A$ in order to have $\xi_A(2)=\xi_A(3)$.

A rectification argument (see \cite{BiluLevRuzsa} and \cite{Levrect}) shows that $|A| > \log_4(p)$.
Since every element $a \in A+\{ 0, d_1, d_2\}$ must belong to at least two sets $A+x$, $x\in \{0, d_1, d_2\}$, as long as $|A| < 2/3 p$ we have 
$$k = |A+\{0, d_1, d_2\}|-|A| \leq \frac{|A|}{2}.$$

This, combined with the bound in \eqref{eqmodq}, gives
$$|A| \geq \sqrt{8p +25}-5. $$

Let $\mu(p) = \min (|A|: A \subseteq \Z_p \mbox{ and $A$ satisfies $\xi_A(2)=\xi_A(3)$ } )$. We conjecture the following:

\bconj\label{conj}
$$\lim_{p\to \infty}\frac{\mu(p)}{p} > 0.$$
\econj

In the following we will show that $\liminf_{p\to \infty}\frac{\mu(p)}{p} \leq \frac{5}{18}$.

To do this we construct sets $B \subseteq [0, 2^{2m}]$ of cardinality $|B| = \frac{13}{18}2^{2m}+ o(2^{2m})$ which is the union of disjoint chains satisfying conditions \eqref{cco1}-\eqref{cco4} in Theorem \ref{teo01dq}.

Since by \cite{Pintz} there exists a prime $p$ in $[2^{2m}, 2^{2m}+2^{21m/20}]$, the complement of the image of the canonical projection of $B$ into $\Z_p$ will have density asymptotic to $5/18$ as required.

Let $d=2^m$, $\mathcal{G}_l = \{ \{0\} \leq [d-1, d] \leq \dots \leq [d(l-1) - (l-1) , d(l-1)] \}$ for $l\leq d$ and $H_i = (id-d, id]$. Let $\varphi(\mathcal{G}_l) = d+ \mathcal{G}_{l-1}$ be the chain of intervals obtained from $\mathcal{G}_l$ by removing the first element in each of its intervals.

If $C= \cup_{i \in I} \mathcal{G}_{l_i} + x_i$ and $\mathcal{G}_a \cap \mathcal{G}_b = \emptyset$ for all $a,b \in I$, then the set $B=\cup_{i \in I} \varphi(\mathcal{G}_{l_i})+x_i$ satisfies the conditions of Theorem \ref{teo01dq}.

Let
$$C=C_0 \coprod_{l=1}^{m-1} \coprod_{i=1}^{m-l} B_i^{(l)},$$
where
\begin{eqnarray*}
C_0&=& \mathcal{G}_{2^m},\\
B_i^{(l)} &=& 2^m(2^{m+1-l} -2^{m+2-l-i} - 1) + 2^{m+1-l-i} + \mathcal{G}_{2^{m+1-l-i}}.
\end{eqnarray*}

If we denote by $B_{i,k}^{(l)}$ the $k$-th interval of the chain, $0 \leq k \leq 2^{m+1-l-i}-1$, we have that
\begin{multline*}B_{i,k}^{l} = [2^m(2^{m+1-l} -2^{m+2-l-i} -1+k) + 2^{m+1-l-i} -k,\\ 2^{m}(2^{m+1-l} -2^{m+2-l-i} -1+k) + 2^{m+1-l-i} ]
\end{multline*}

Suppose now that $B_{i,k}^{(l)} \cap B_{i',k'}^{(l')} \neq \emptyset$. Then, since $B_{i,k}^{(l)} \subseteq H_{2^{m+1-l} -2^{m+2-l-i} +k}$, for $\alpha(l, i, k) = 2^{m+1-l} -2^{m+2-l-i} +k$, we must have $\alpha(l, i, k) = \alpha(l', i', k')$.

{\bfseries Case 1:} $i, i' \geq 2$.

In this case we have $\alpha(l, i, k) \in [2^{m-l}, 2^{m+1-l})$ and since any two of these intervals are disjoint, we must have $l = l'$, which implies that $$k-2^{m+2-l-i} = k'-2^{m+2-l'-i'} \in [-2^{m+2-l-i'}, -2^{m+1-l-i'}].$$ Again, since any two of these intervals are disjoint, we must have $i=i'$, which immediately gives $k=k'$.

{\bfseries Case 2:} $i=1$.

In this case from the equality $\alpha(l, i, k)=\alpha(l', i', k')$ we have $$k = 2^{m+1-l'}-2^{m+2-l'-i'}+k'.$$

If $i' \geq 2$, then the left hand side is in $[0, 2^{m-l})$, while the right hand side belongs to $[2^{m-l'}, 2^{m+1-l'})$. From this we get that $m-l' < m-l$ and so $\max(B_{i,k}^{l}) > \max{B_{i', k'}^{l'}}$.
Moreover, we have
$$2^{m-l}-k = 2^{m-l} - 2^{m+1-l'} + 2^{m+2-l'-i'} - k' > 2^{m+1-l'-i'}$$
since $k' < 2^{m+1-l'-i'}$, so that $\max{B_{i', k'}^{l'}} < \min(B_{i,k}^{l})$ and $B_{i,k}^{(l)} \cap B_{i,k}^{(l')} = \emptyset$.

If  also $i'=1$, then $k=k'$ and, if $l < l'$, we have $k \leq 2^{m-l'}-1 \leq 2^{m-l-1} -1$, so that $2^{m-l}-k \geq 2^{m-l-1} +1 \geq 2^{m-l'} +1$, and so  $B_{i,k}^{(l)} \cap B_{i,k}^{(l')} = \emptyset$.

Since $|\varphi(\mathcal{G}_l)| = \frac{l(l-1)}{2}$, for $B=\varphi(C_0) \coprod_{l=1}^{m-1} \coprod_{i=1}^{m-l} \varphi(B_i^{(l)})$, we have

\begin{eqnarray*}
|B| &=& \frac{2^m(2^m-1)}{2} + \sum_{l=1}^{m-1} \sum_{i=1}^{m-l} \frac{2^{m+1-l-i}(2^{m+1-l-i}-1)}{2} = \frac{13}{18}2^{2m} + o(2^{2m})
\end{eqnarray*}
as required.

Go back to the general case of composite modulus $q$.
An analogue of Conjecture \ref{conj} cannot hold in this case, as we can just take a set $A' \subseteq \Z_{q'}$ with $\xi_{A'}(2)=\xi_{A'}(3)$ and consider the set $A = A' \times \{0\} \subseteq \Z_{q'} \times \Z_{q''}=\Z_q$ for any coprime $q', q''$ with $q=q'q''$.

We can now finish the proof of Theorem \ref{digsetteo}and Corollary \ref{digsetcorollary}.

If $A$ is a digital set with $\xi_A(2) = m+3 = \xi_A(3) = |A+\{0, d_1, d_2\}|$ as in Theorem \ref{teo01dq}, we have by Lemma \ref{sg} and \eqref{eqmodq} that 
$$|A| \geq z|H| -6,$$
with $z \in \{2,3 \}$.

Therefore there exists a coset $t+H$ of $H=\langle d_1 \rangle$ such that
$$\frac{|H|}{2} \geq |A \cap (t+H)| \geq |H|-3.$$

This means that $q/d_1 = |H| \leq 6$, and so $l d_1 \equiv 0$ mod $q$ for some $1 \leq l \leq 6$, and any arithmetic progression of difference $d_1$ forming $A$ could not have more that $5$ elements, implying that $m \leq 15$.

To prove Corollary \ref{digsetcorollary}, thanks to Theorem \ref{digsetteo} we are left to cosider the case of $A=P_1 \cup P_2$ a proper union of two arithmetic progressions of common difference $d$, and $2A \subseteq \{x, y\} +A$.

Once we establish that such a set cannot be a digital set, we are done since the only possibilities for a single arithmetic progression to be a digital set with minimal number of distinct carries are clearly the ones stated in the corollary.

Consider at first the case $(d, q) > 1$. After a dilation, thanks to Lemma \ref{redd2}, we can assume $d|q$. Since $A$ is a digital set, we must have $d=2$ and hence $2|q$.

Moreover, by  \ref{sg1} we have $|P_1|=|P_2|= m/2$, and $P_i = \alpha_i + 2\cdot[0, m/2-1]$, $i=1,2$ , where $\alpha_1  \not\equiv \alpha_2$ mod $2$.

Then
$$2A=  (2\alpha_1 + 2\cdot[0, m-1]) \cup (\alpha_1 + \alpha_2 +  2\cdot[0,m-1]) \cup (2\alpha_2 +  2\cdot[0, m-1]).$$

By the parity of $\alpha_1$ and $\alpha_2$, we must have $$|(2\alpha_1 + 2\cdot[0,m-1]) \cup  (2\alpha_2 + 2\cdot[0,m-1])| \leq m+1,$$

which implies without loss of generality, since $2m \leq q$, that $2\alpha_1 \in \{2\alpha_2, 2\alpha_2 + 2, 2\alpha_2 + 4 \}$.

Once again, since $\alpha_1 \not\equiv \alpha_2$ mod $2$, and $A$ is not an arithmetic progression, this leaves us with the only choice $\alpha_1 = \alpha_2 + 1 + q/2$, and so, up to translation,
 $$A=2\cdot\left[0, \frac{m}{2}-1\right] \cup \left(\frac{q}{2}+1+2\cdot \left[0, \frac{m}{2}-1\right]\right),$$

which is a single arithmetic progression of difference $q/2+1$.

Assume now $(d, q)=1$, so that, after a dilation and a translation, we can assume that $A$ is of the form
$$A = [0, a-1] \cup [bm+a, (b+1)m -1],$$
with $a \geq m-a$, $1 \leq b \leq q/m -2$.

 Then $2A = B_1 \cup B_2 \cup B_3$, where
$$
B_1 = [0, 2a-2], \quad B_2 = [bm+a, (b+1)m+a-2], \quad B_3 = [2bm + 2a, 2(b+1)m -2]
$$
are all non empty sets.

A routine check shows that $B_1 \cap B_2 = \emptyset$, and $|B_1|+|B_2|= 2a+m-2 \leq 2m$ implies $m/2 \leq a \leq (m+2)/2$ and $|B_3 \cap (B_1 \cup B_2)^c| \leq 2$.

For these possible values of $a$, we must have $B_3 \subseteq B_1$, so that $m(2b+1) \equiv 0$ mod $q$, and since all primes dividing $m$ must divide $q/m$, we have $2 \nmid q$ and so
$$a=\frac{m+1}{2},bm= \frac{q-m}{2} \Longrightarrow A=\left[ 0, \frac{m-1}{2}\right] \cup \left[ \frac{q+1}{2}, \frac{q+m-2}{2}\right].$$
Once again, this is a single arithmetic progression of difference $(q+1)/2$.
\end{section}

\section*{}
\bibliography{bibliography}
     \end{document}